%%%%%%%%%%%%%%%%%%%%%%%%%%%%%
%%%%%%%%%%%%%%%%%%%%%%%%%%%%%
\documentclass[reqno,12pt]{amsart}

\usepackage{amsmath, epsfig}
\usepackage{amssymb}
\usepackage{amsfonts}
\usepackage{latexsym}
\usepackage{amsthm}
\usepackage{cite}
\newtheorem{theorem}{Theorem}%[section]

\newtheorem{lemma}{Lemma}

\theoremstyle{remark}

\setlength{\textwidth}{160.0mm} \setlength{\oddsidemargin}{0mm}
\setlength{\evensidemargin}{0mm} \addtolength{\topmargin}{-.1cm}
\addtolength{\textheight}{.2cm}

\numberwithin{equation}{section}

\newcommand{\Z}{\mathbb Z}

\allowdisplaybreaks

\author{Xiaoxia Wang}
\address{DEPARTMENT OF MATHEMATICS, SHANGHAI UNIVERSITY, SHANGHAI 200444, P. R. CHINA}
\email{$^*$ Corresponding author. xiaoxiawang@shu.edu.cn (X. Wang), xchangi@shu.edu.cn (C. Xu).}

\author{Chang Xu$^*$}
\title{New $q$-supercongruences from the Bailey transformation }
\subjclass[2010]{Primary 33D15; Secondary 11A07, 11B65}

\keywords {hypergeometric series; Ramanujan-type supercongruence; Bailey transformation formula; creative microscoping.}

\begin{document}
\begin{abstract}
Inspired by the recent work of Guo, we establish some new $q$-supercongruences including $q$-analogues of some Ramannujan-type supercongruences, by using the Bailey transformation formula and the `creative microscoping' method recently introduced by Guo and Zudilin.
\end{abstract}
\maketitle
\section{introduction}

It was van Hamme \cite{Van} that investigated partial sums of Ramanujan's summation formular for $1/\pi$
and conjectured $13$ Ramanujan-type supercongruences labeled as (A.2)-(M.2), such as:
\begin{equation}\label{eq5}
\begin{split}
&\mathrm{(B.2)}\quad\sum_{k=0}^{(p-1)/2}(-1)^k(4k+1)\frac{(\frac{1}{2})^3_k}{k!^3}\equiv\frac{-p}{\Gamma_p(\frac{1}{2})^2}\pmod{p^3};\\
&\mathrm{(E.2)}\quad\sum_{k=0}^{(p-1)/3}(-1)^k(6k+1)\frac{(\frac{1}{3})^3_k}{k!^3}\equiv p\pmod{p^3}
\quad\mathrm{if}\quad p\equiv 1\pmod6;\\
&\mathrm{(F.2)}\quad\sum_{k=0}^{(p-1)/4}(-1)^k(8k+1)\frac{(\frac{1}{4})^3_k}{k!^3}\equiv\frac{-p}{\Gamma_p(\frac{1}{4})\Gamma_p(\frac{3}{4})}\pmod{p^3}.
\end{split}
\end{equation}
Here and in what follows, $p$ is an odd prime, $(a)_n=a(a+1)\cdots(a+n-1)$ is  the Pochhammer symbol, and
 the $p$-adic gamma function is defined as
 $$\Gamma_p(x)=\lim_{m\rightarrow x}(-1)^m\prod_{\substack{0< k< m\\ (k,p)=1}}k,$$
 where the limit is for $m$ tending to $x$ $p$-adically in $\Z_{\ge 0}$.
In 2015, Swisher \cite{Swisher} proved the following unified form of
van Hamme's three supercongruences in \eqref{eq5}:
\begin{equation}\label{eq6}
\sum_{k=0}^{a(b p-1)}(-1)^{k}\left(\frac{2 k}{a}+1\right) \frac{(a)_{k}^{3}}{k !^{3}}
\equiv(-1)^{a(b p-1)} p \cdot b=\frac{-p b}{\Gamma_{p}(a) \Gamma_{p}(1-a)} \pmod {p^3},
\end{equation}
where
 $a \in\left\{\frac{1}{2}, \frac{1}{3}, \frac{1}{4}\right\}$, $p \geq 5$ when $a=1/4$,
 $b=1$ when $p\equiv 1 \pmod {\frac{1}{a}}$ and $b=\frac{1}{a}-1$ when $ p \equiv-1\pmod {\frac{1}{a}}$.

For completeness, we first present the needed definitions. For any complex numbers $a$ and $q$, the
{\em $q$-shifted factorial} is defined as
\begin{align*}
    (a;q)_\infty=\prod_{j=0}^{\infty}(1-aq^j) \quad \mathrm{and} \quad (a;q)_k = \frac{(a;q)_\infty}{(aq^k;q)_\infty}\quad \mathrm{where}\quad k\in \Z.
    \end{align*}
    For convenience, we adopt the notation
    \begin{align*}
    (a_1,a_2 ,...,a_r;q)_k=(a_1;q)_k(a_2;q)_k \cdots(a_r;q)_k,  \quad k \in \mathbb {C} \cup \infty.
    \end{align*}
Following Gasper and Rahman \cite{Gasper}, the basic hypergeometric series $_{r+1}\phi_r$ is defined by
\begin{equation}\label{}
\begin{split}
_{r+1}\phi_{r}\left[\begin{array}{c}
a_1,a_2,\ldots,a_{r+1}\\
b_1,b_2,\ldots,b_{r}
\end{array};q,\, z
\right]
=\sum_{k=0}^{\infty}\frac{(a_1,a_2,\ldots, a_{r+1};q)_k}
{(q,b_1,\ldots,b_{r};q)_k} z^k.
\end{split}
\end{equation}
Let $[n]_q=[n]=(1-q^n)/(1-q)$ be the $q$-{\em integer} and $\Phi_n(q)$ the $n$-th {\em cyclotomic polynomial} in $q$:
\begin{align*}
\Phi_n(q)=\prod_{\substack{1\leqslant k\leqslant n\\ \gcd(n,k)=1}}(q-\zeta^k),
\end{align*}
where $\zeta$ is an $n$-th primitive root of unity.

In recent years, a number of authors have studied Ramanujan-type supercongruences from the perspective of $q$-analogues.
There are various methods to prove $q$-congruences, such as, properties of roots of unity \cite{Ni},
basic hypergeometric series identities \cite{WeiChuanan,Wei2,Long,Liu1,Liu_Wang1,Wang_Yu}, the $q$-Wilf-Zeilberger method \cite{Tauraso,Guo1}.
For instance, using the Bailey transformation formula (see \cite[Proposition 11.4]{Guo}): for $|\alpha^2ab/q^2|<1$,
\begin{flalign}\label{Eq2}
&\sum_{k=0}^{\infty}\frac{(1+\alpha q^{2k})(\alpha^2,q^2/a,q^2/b;q^2)_k(-q,\alpha q/\lambda;q)_k(-\alpha^2\lambda ab)^kq^{k^2-3k}}
{(1+\alpha)(q^2,\alpha^2a,\alpha^2b;q^2)_k(\alpha,-\lambda;q)_k}\nonumber\\
&=\frac{(\alpha^2q^2,\alpha^2ab/{q^2};q^2)_\infty}{(\alpha^2a,\alpha^2b;q^2)_\infty}
\,{}_{4}\phi_{3}\!\left[\begin{array}{c}
q^2/a,\ q^2/b,\ \lambda,\ q\lambda \\
q\alpha,\, q^2\alpha,\, \lambda^2
\end{array};q^2,\, \frac{\alpha^2ab}{q^2}
\right],
\end{flalign}
 Guo \cite{Guo} proved some $q$-supercongruences as follows: for odd integers $n$,
\begin{flalign}
&\sum_{k=0}^{(n-1) / 2}(-1)^{k} \frac{\left(1+q^{4 k+1}\right)\left(q^{2} ; q^{4}\right)_{k}^{3}}{(1+q)\left(q^{4}
 ; q^{4}\right)_{k}^{3}} q^{2 k^{2}+2 k}\nonumber\\
&\quad\equiv(-1)^{(n-1) / 2} q^{(n-1)^{2} / 2}[n]_{q^{2}}\sum_{k=0}^{(n-1)/2} \frac{\left(q^{2} ; q^{4}\right)_{k}^{3}
 q^{2 k}}{\left(q^{4}, q^{3},q^{5}; q^{4}\right)_{k}} \pmod{\Phi_{n}(-q)^{3}\Phi_{n}(q)^{2}};\label{eq8}\\
&\sum_{k=0}^{(n+1) / 2}(-1)^{k} \frac{\left(1+q^{4k-1}\right)\left(q^{-2};q^{4}\right)_{k}^{3}}
{\left(1+q^{-1}\right)\left(q^{4};q^{4}\right)_{k}^3} q^{2k^2+6k}\nonumber\\
&\quad\equiv(-1)^{(n-1)/2}q^{(n-1)^2/2}[n]_{q^2}\sum_{k=0}^{(n+1)/2} \frac{\left(q^{-2}; q^{4}\right)_{k}^2
\left(q^2 ; q^{4}\right)_{k}}{\left(q^{4} ,q , q^{3} ; q^{4}\right)_{k}} q^{6k}\pmod{\Phi_{n}(-q)^{3}\Phi_{n}(q)^{2}}.\label{eq13}
\end{flalign}
Clearly, van Hamme's supercongruence (B.2) follows from the $q$-supercongruence \eqref{eq8} with $n=p$ and $q\rightarrow-1$.
Also, when $n=p$ and $q\rightarrow1$, \eqref{eq8} reduces to
the following supercongruence:
\begin{equation*}
\sum_{k=0}^{(p-1)/2}(-1)^{k} \frac{(\frac{1}{2})_{k}^{3}}{k !^{3}}
\equiv(-1)^{(p-1)/2} p \sum_{k=0}^{(p-1)/2}\frac{(\frac{1}{2})_{k}^{3}}{k!(\frac{3}{4})_{k}(\frac{5}{4})_{k}} \pmod {p^2},
\end{equation*}
which is an equivalent form of  the supercongruence proved by He \cite{He}.

Inspired by the above work, we establish some new Ramanujan-type $q$-supercongruences in the following results.
 \begin{theorem}\label{Thm3}
Let $n\equiv s\pmod3$ be a positive odd integer with $s\in\left\{1,-1\right\}$. Then,
modulo $\Phi_n(-q)^3\Phi_n(q)^2$,
\begin{align}\label{EE1}
&\sum_{k=0}^{(n-s)/3}(-1)^{k}
\frac{\left(1+q^{6k+s}\right)\left(q^{2s};q^{6}\right)_{k}^{3}}{\left(1+q^s\right)\left(q^{6} ; q^{6}\right)_{k}^{3}} q^{3k^2+2k(3-s)}\nonumber\\
&\quad\equiv(-1)^{({n-s})/{3}}\frac{[n]_{q^2}}{[s]_{q^2}}q^{{(n-s)(n+s-3)}/{3}}\sum_{k=0}^{(n-s)/3} \frac{\left(q^{2s} ; q^{6}\right)_{k}^{2}\left(q^{3} ; q^{6}\right)_{k}}{\left(q^{6},q^{3+s} ,q^{6+s} ; q^{6}\right)_{k}} q^{2k(3-s)}.
\end{align}
\end{theorem}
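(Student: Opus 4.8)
The plan is to reproduce, in the present setting, the creative microscoping argument that Guo used for \eqref{eq8} and \eqref{eq13}, with the Bailey transformation \eqref{Eq2} as the engine. The first and most mechanical step is to pin down the specialization of \eqref{Eq2} that matches both sides of \eqref{EE1}. Replacing $q$ by $q^{3}$ and taking $\alpha=q^{s}$ and $\lambda=q^{3}$, the factors $(-q^{3},q^{s};q^{3})_k$ produced in the numerator cancel exactly against $(q^{s},-q^{3};q^{3})_k$ in the denominator, so that, keeping $a,b$ free, \eqref{Eq2} collapses to
\begin{equation*}
\sum_{k\ge0}\frac{(1+q^{6k+s})(q^{2s},q^{6}/a,q^{6}/b;q^{6})_k}{(1+q^{s})(q^{6},q^{2s}a,q^{2s}b;q^{6})_k}(-1)^k(ab)^k q^{3k^2-2k(3-s)}
=\frac{(q^{2s+6},q^{2s}ab/q^{6};q^{6})_\infty}{(q^{2s}a,q^{2s}b;q^{6})_\infty}\,{}_{4}\phi_{3}\!\left[\begin{array}{c}q^{6}/a,\ q^{6}/b,\ q^{3},\ q^{6}\\ q^{3+s},\ q^{6+s},\ q^{6}\end{array};q^{6},\,\frac{q^{2s}ab}{q^{6}}\right].
\end{equation*}
Putting $a=b=q^{6-2s}$ turns the summand on the left into $(-1)^k(1+q^{6k+s})(q^{2s};q^{6})_k^3/[(1+q^{s})(q^{6};q^{6})_k^3]\,q^{3k^2+2k(3-s)}$ and the ${}_{4}\phi_{3}$ into precisely the series on the right of \eqref{EE1}; hence $a=b=q^{6-2s}$ is the target, and the entire content of the theorem is a congruence between the two truncations of this identity.

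To produce a genuinely finite identity I would keep $a=q^{6-2s}$ and leave $b$ free, then choose $q^{6}/b=q^{-2(n-s)}$, i.e.\ $b=q^{2n-2s+6}$, so that $(q^{6}/b;q^{6})_k$ vanishes for $k>(n-s)/3$ and both series terminate exactly at $(n-s)/3$. At this value the displayed identity becomes an honest finite equality whose product prefactor I would simplify, by routine $q$-shifted factorial manipulation, into an explicit closed form. Since $n$ is odd one has $\Phi_n(q^{2})=\Phi_n(q)\Phi_n(-q)$ and hence $q^{2n}\equiv1\pmod{\Phi_n(q^{2})}$, so $b=q^{2n-2s+6}\equiv q^{6-2s}$; this is what lets the finite identity at $b=q^{2n-2s+6}$ serve as a deformation of the desired $b=q^{6-2s}$ congruence, with the discrepancy controlled by the order of vanishing of each summand in the variable $b$.

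The prefactor $(-1)^{(n-s)/3}[n]_{q^{2}}/[s]_{q^{2}}\,q^{(n-s)(n+s-3)/3}$ should then drop out of the simplified product prefactor. Here the factorization $[n]_{q^{2}}=[n]_q[n]_{-q}$ with $\Phi_n(q)\mid[n]_q$ and $\Phi_n(-q)\mid[n]_{-q}$ already accounts for one power of each of $\Phi_n(q)$ and $\Phi_n(-q)$; the remaining factor $\Phi_n(-q)^{2}\Phi_n(q)$ of the modulus must come from the microscoping proper. I would obtain it by running the argument separately at the two families of roots of unity, those with $q^{n}=1$ (which govern $\Phi_n(q)$) and those with $q^{n}=-1$ (which govern $\Phi_n(-q)$), expanding both the very-well-poised summand and the prefactor to the required order in the relevant cyclotomic factor, and finally passing from $b=q^{2n-2s+6}$ back to $b=q^{6-2s}$ using the coprimality of $\Phi_n(q)$ and $\Phi_n(-q)$ together with a standard limiting argument.

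The hard part will be exactly this last multiplicity bookkeeping: showing that the congruence holds to the precise orders $\Phi_n(-q)^{3}$ and $\Phi_n(q)^{2}$ and not to smaller powers. The asymmetry between the two exponents means the analysis at $q^{n}=-1$ must be carried one order further than at $q^{n}=1$, which forces a careful second-order expansion of the summand and of the prefactor, together with a proof that the tail terms $k>(n-s)/3$ of the non-terminating $b=q^{6-2s}$ series contribute nothing modulo $\Phi_n(-q)^{3}\Phi_n(q)^{2}$. I expect the case $s=-1$, where $q^{2s}=q^{-2}$ brings in negative powers and $[s]_{q^{2}}=[-1]_{q^{2}}=-q^{-2}$ must be tracked correctly, to be the main source of additional technical care.
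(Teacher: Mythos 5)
Your Bailey specialization is exactly right and matches the paper's: replace $q\to q^3$, take $\alpha=q^s$, $\lambda=q^3$ (so the $(-q^3,q^s;q^3)_k$ factors cancel), and note that the target of \eqref{EE1} is the point $a=b=q^{6-2s}$ of the resulting identity. But your deformation strategy has a genuine gap at precisely the step you defer as ``multiplicity bookkeeping.'' You freeze $a=q^{6-2s}$ and deform only $b$, anchoring the argument at the single point $b=q^{6-2s}q^{2n}$ where both sides terminate. A one-parameter deformation verified at one point can only yield one linear factor: writing $F(b)$ for the difference of the two truncated sums, the exact Bailey evaluation gives $F(q^{6-2s}q^{2n})=0$, hence $F(b)$ is divisible by a single factor $(b-q^{6-2s}q^{2n})$, and specializing back to $b=q^{6-2s}$ produces only $(1-q^{2n})$, i.e.\ one copy each of $\Phi_n(q)$ and $\Phi_n(-q)$. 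Combined with the termwise reflection cancellation modulo $\Phi_n(-q)$, this caps out at $\Phi_n(-q)^2\Phi_n(q)$ --- one full factor $\Phi_n(q)\Phi_n(-q)$ short of the stated modulus $\Phi_n(-q)^3\Phi_n(q)^2$. Your fallback (second-order expansions of summand and prefactor at both families of roots of unity) is not a ``standard limiting argument''; it is an entirely different and substantially harder route, and your proposal does not carry it out. There is also a secondary unaddressed point: with your asymmetric choice the Bailey prefactor comes out as $(q^{2s+6};q^6)_{(n-s)/3}/(q^6;q^6)_{(n-s)/3}$, which is not identically equal to the prefactor $(-1)^{(n-s)/3}\frac{[n]_{q^2}}{[s]_{q^2}}q^{(n-s)(n+s-3)/3}$ in \eqref{EE1}, so you would additionally need a congruence relating the two.

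The paper's proof (of the more general Theorem \ref{Thm1}, of which Theorem \ref{Thm3} is the case $d=3$, $r=s$) avoids this by inserting the microscoping parameter \emph{symmetrically} into the summand, as in Lemma \ref{Lemma2}: the numerator carries $(aq^{2s},q^{2s}/a,q^{2s};q^6)_k$ and the denominator $(aq^6,q^6/a,q^6;q^6)_k$. In Bailey coordinates this means deforming the two Bailey parameters in opposite directions, $a_{\mathrm{Bailey}}=q^{6-2s}a^{-1}$ and $b_{\mathrm{Bailey}}=q^{6-2s}a$, so that the series terminates and Bailey applies at \emph{both} $a=q^{2n}$ and $a=q^{-2n}$, and moreover the prefactor \eqref{eq3} simplifies exactly (not merely congruently) to the one in \eqref{EE1}. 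The difference of the two sides is then divisible by the quadratic factor $(1-aq^{2n})(a-q^{2n})$, and also by $\Phi_n(-q)$ identically in $a$ via the Guo--Schlosser lemma (Lemma \ref{Lemma1}); letting $a\to 1$ turns the quadratic factor into $(1-q^{2n})^2$, which supplies $\Phi_n(q)^2\Phi_n(-q)^2$, giving the full modulus $\Phi_n(-q)^3\Phi_n(q)^2$. This two-sided evaluation is the key idea missing from your plan; without it, the exponents in the modulus cannot be reached by the polynomial-divisibility argument you rely on.
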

Performing $n=p$, $s=-1$, $q\to 1$; $n=p$, $s=1$, $q\to 1$; $n=p$, $s=-1$, $q\to -1$ and $n=p$, $s=1$, $q\to -1$ in Theorem \ref{Thm3} respectively, we get some new supercongruences as follows:
\begin{align*}
&\sum_{k=0}^{(p+1)/3}(-1)^k\frac{(-\frac{1}{3})^3_k}{k!^3}\equiv-(-1)^{(p+1)/3}p
\sum_{k=0}^{(p+1)/3}\frac{(-\frac{1}{3})^2_k(\frac{1}{2})_k}{k!(\frac{1}{3})_k(\frac{5}{6})_k}\pmod{p^2},\\
&\sum_{k=0}^{(p-1)/3}(-1)^k\frac{(\frac{1}{3})^3_k}{k!^3}\equiv (-1)^{(p-1)/3}p
\sum_{k=0}^{(p-1)/3}\frac{(\frac{1}{3})^2_k(\frac{1}{2})_k}{k!(\frac{2}{3})_k(\frac{7}{6})_k}\pmod{p^2},\\
&\sum_{k=0}^{(p+1)/3}(6k-1)\frac{(-\frac{1}{3})^3_k}{k!^3}\equiv p
\sum_{k=0}^{(p+1)/3}\frac{(-\frac{1}{3})^2_k}{k!(\frac{1}{3})_k}\pmod{p^3},\\
&\sum_{k=0}^{(p-1)/3}(6k+1)\frac{(\frac{1}{3})^3_k}{k!^3}\equiv p
\sum_{k=0}^{(p-1)/3}\frac{(\frac{1}{3})^2_k}{k!(\frac{2}{3})_k}\pmod{p^3},
\end{align*}
where the last congruence is similar to van Hamme's (E.2).
\begin{theorem}\label{Thm4}

Let $n\equiv s\pmod4$ be a positive integer with $s\in\left\{1,-1\right\}$. Then,
modulo $\Phi_n(-q)^3\Phi_n(q)^2$,
\begin{equation}\label{EE2}
\begin{split}
&\sum_{k=0}^{(n-s)/4}(-1)^{k}
\frac{\left(1+q^{8k+s}\right)\left(q^{2s};q^{8}\right)_{k}^{3}}{\left(1+q^s\right)\left(q^{8} ; q^{8}\right)_{k}^{3}} q^{4k^2+2k(4-s)}\\
&\quad\equiv(-1)^{({n-s})/{4}}\frac{[n]_{q^2}}{[s]_{q^2}}q^{{(n-s)(n+s-4)}/{4}}\sum_{k=0}^{(n-s)/4} \frac{\left(q^{2s} ; q^{8}\right)_{k}^{2}\left(q^{4} ; q^{8}\right)_{k}}{\left(q^{8},q^{4+s} ,q^{8+s} ; q^{8}\right)_{k}} q^{2k(4-s)}.
\end{split}
\end{equation}
\end{theorem}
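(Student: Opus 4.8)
The plan is to run the creative-microscoping programme of Guo and Zudilin on top of the Bailey transformation \eqref{Eq2}, following the architecture of Guo's proof of \eqref{eq8} and \eqref{eq13}; indeed \eqref{EE2} is the base-$q^8$ avatar of those base-$q^4$ identities, with $s$ tracking the residue of $n$. The first task is to locate the specialization of \eqref{Eq2} that turns its very-well-poised left-hand side into the summand of \eqref{EE2}. Taking $q\mapsto q^4$, $\alpha=q^s$, $\lambda=q^4$ and $a=b=q^{8-2s}$, one finds $1+\alpha q^{8k}=1+q^{8k+s}$, then $(\alpha^2;q^8)_k(q^8/a;q^8)_k(q^8/b;q^8)_k=(q^{2s};q^8)_k^3$ and $(\alpha^2 a;q^8)_k(\alpha^2 b;q^8)_k=(q^8;q^8)_k^2$, while the residual numerator factors $(-q^4,\alpha q^4/\lambda;q^4)_k=(-q^4,q^{s};q^4)_k$ cancel against $(\alpha,-\lambda;q^4)_k=(q^{s},-q^4;q^4)_k$ in the denominator; the accumulated power of $q$ then collapses to $q^{4k^2+2k(4-s)}$ and the sign to $(-1)^k$. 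Under the same substitution the ${}_4\phi_3$ on the right of \eqref{Eq2} becomes $\sum_k (q^{2s};q^8)_k^2(q^4;q^8)_k\,q^{2k(4-s)}/(q^8,q^{4+s},q^{8+s};q^8)_k$, which is exactly the sum on the right of \eqref{EE2}. Thus \eqref{Eq2} matches the two members of Theorem~\ref{Thm4} up to the prefactor $(q^{2s+8},q^{8-2s};q^8)_\infty/(q^8;q^8)_\infty^2$, and it is from this prefactor that the explicit constant $(-1)^{(n-s)/4}[n]_{q^2}q^{(n-s)(n+s-4)/4}/[s]_{q^2}$ must ultimately emerge.

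Since \eqref{Eq2} is an identity between non-terminating series, the second task is to make it terminate and to promote the formal match into the congruence. I would introduce the microscoping parameter $a$ by deforming one of the three copies of $(q^{2s};q^8)_k$ in the numerator, together with one copy of $(q^8;q^8)_k$ in the denominator, into the balanced pair $(aq^{2s},q^{2s}/a;q^8)_k$ over $(aq^8,q^8/a;q^8)_k$, so that $a=1$ restores \eqref{EE2}, and deform the right-hand sum and the prefactor accordingly. With the parameter present the transformation terminates at the cutoff $k=(n-s)/4$, and evaluating the resulting prefactor at the two special values $a=q^{\pm 2n}$, via a terminating $q$-Dougall-type summation, should reproduce the constant $(-1)^{(n-s)/4}[n]_{q^2}q^{(n-s)(n+s-4)/4}/[s]_{q^2}$ and furnish the parametric congruence modulo $(1-aq^{2n})(a-q^{2n})$. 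Letting $a\to 1$ and using that $n\equiv\pm1\pmod 4$ is odd, so that $\Phi_n(q)\mid 1-q^n$ and $\Phi_n(-q)=\Phi_{2n}(q)\mid 1+q^n$, whence $\Phi_n(q)\Phi_n(-q)\mid 1-q^{2n}$, the factor $(1-aq^{2n})(a-q^{2n})\to(1-q^{2n})^2$ delivers \eqref{EE2} modulo $\Phi_n(q)^2\Phi_n(-q)^2$.

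The main obstacle is the asymmetry of the target modulus: one must upgrade $\Phi_n(-q)^2$ to $\Phi_n(-q)^3$, and this extra power, the one responsible via $q\to-1$ for the genuine modulo-$p^3$ supercongruences listed after Theorem~\ref{Thm3}, does not fall out of the symmetric $(1-aq^{2n})(a-q^{2n})$ bookkeeping. I expect to establish it by a dedicated divisibility argument at $q^n=-1$, exploiting the well-poised factor $1+q^{8k+s}$: under $q^n=-1$ the terms of \eqref{EE2} should admit the reflection $k\mapsto (n-s)/4-k$, and the resulting antisymmetry should force one further order of vanishing of the difference of the two sides, in the same spirit as the $q\mapsto-q$ reduction underlying van Hamme's (B.2). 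Because $\Phi_n(q)$ and $\Phi_n(-q)$ are coprime for $n>1$, the congruence modulo $\Phi_n(q)^2\Phi_n(-q)^2$ and the separate congruence modulo $\Phi_n(-q)^3$ then combine to the asserted modulus $\Phi_n(-q)^3\Phi_n(q)^2$. The two points I anticipate needing the most care are the exact evaluation of the $a$-deformed prefactor, so as to recover $[n]_{q^2}/[s]_{q^2}$ together with the $q$-power, and the clean justification of the extra $\Phi_n(-q)$, both of which will likely require the cases $s=1$ and $s=-1$ to be run with slightly different truncations.
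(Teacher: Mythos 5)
Your setup matches the paper's: the same microscoping deformation (one pair $(aq^{2s},q^{2s}/a;q^8)_k$ over $(aq^8,q^8/a;q^8)_k$), the same evaluation of the deformed identity at $a=q^{\pm 2n}$ via the Bailey transformation \eqref{Eq2} (the prefactor is handled by the elementary simplification \eqref{eq3}, not a $q$-Dougall summation, but that is cosmetic), and the same reflection $k\mapsto (n-s)/4-k$ for the $\Phi_n(-q)$ divisibility. The genuine gap is in how you extract the third power of $\Phi_n(-q)$. Your plan is: (i) prove the congruence modulo $\Phi_n(q)^2\Phi_n(-q)^2$ by letting $a\to 1$ in the congruence modulo $(1-aq^{2n})(a-q^{2n})$; (ii) separately prove a congruence modulo $\Phi_n(-q)^3$ by the reflection argument at $q^n=-1$ applied to \eqref{EE2} itself; (iii) combine by coprimality. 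Step (ii) fails: the pairwise cancellation of the $k$-th and $((n-s)/4-k)$-th terms (which is what Lemma \ref{Lemma1} delivers) shows only that \emph{each side} of \eqref{EE2} is $\equiv 0\pmod{\Phi_n(-q)}$ --- one power, not three --- and combining ``difference $\equiv 0$ mod $\Phi_n(-q)^2\Phi_n(q)^2$'' with ``each side $\equiv 0$ mod $\Phi_n(-q)$'' gives nothing beyond $\Phi_n(-q)^2$, because these moduli share the factor $\Phi_n(-q)$; orders of vanishing do not add this way. There is no mechanism in your sketch that produces the third power.

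The correct mechanism, which is what the paper's Lemma \ref{Lemma2} does (Theorem \ref{Thm4} is then the case $d=4$, $r=s$ of Theorem \ref{Thm1}), is to run the reflection argument \emph{on the $a$-deformed sums, with $a$ an indeterminate, before taking any limit}. The cancellation then shows that both sides of the parametric congruence \eqref{eq2} vanish modulo $\Phi_n(-q)$ identically in $a$; since $\Phi_n(-q)$, $1-aq^{2n}$ and $a-q^{2n}$ are pairwise coprime polynomials, the parametric congruence holds modulo the full product $\Phi_n(-q)(1-aq^{2n})(a-q^{2n})$. Only now does one let $a\to 1$: the factor $(1-aq^{2n})(a-q^{2n})$ degenerates to $(1-q^{2n})^2$, which for odd $n$ contains $\Phi_n(q)^2\Phi_n(-q)^2$, and together with the $\Phi_n(-q)$ already in the modulus this yields exactly $\Phi_n(-q)^3\Phi_n(q)^2$. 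In short, the extra power of $\Phi_n(-q)$ comes from performing the antisymmetry argument at the parametric level so that it multiplies into the modulus before the limit, not from any sharper divisibility after it; you also need the (easy, since $n$ is odd and $d=4$) check that the denominators at $a=1$ are coprime to $\Phi_n(q^2)$, so the limit preserves the congruence.
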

Choosing $n=p$, $s=-1$, $q\to 1$; $n=p$, $s=1$, $q\to 1$; $n=p$, $s=-1$, $q\to -1$ and $n=p$, $s=1$, $q\to -1$ in Theorem \ref{Thm4} respectively, we deduce the following supercongruences:
\begin{align*}
&\sum_{k=0}^{(p+1)/4}(-1)^k\frac{(-\frac{1}{4})^3_k}{k!^3}\equiv-(-1)^{(p+1)/4}p
\sum_{k=0}^{(p+1)/4}\frac{(-\frac{1}{4})^2_k(\frac{1}{2})_k}{k!(\frac{3}{8})_k(\frac{7}{8})_k}\pmod{p^2},\\
&\sum_{k=0}^{(p-1)/4}(-1)^k\frac{(\frac{1}{4})^3_k}{k!^3}\equiv (-1)^{(p-1)/4}p
\sum_{k=0}^{(p-1)/4}\frac{(\frac{1}{4})^2_k(\frac{1}{2})_k}{k!(\frac{5}{8})_k(\frac{9}{8})_k}\pmod{p^2},\\
&\sum_{k=0}^{(p+1)/4}(-1)^k(8k-1)\frac{(-\frac{1}{4})^3_k}{k!^3}\equiv (-1)^{(p+1)/4}p\pmod{p^3},\\
&\sum_{k=0}^{(p-1)/4}(-1)^k(8k+1)\frac{(\frac{1}{4})^3_k}{k!^3}\equiv (-1)^{(p-1)/4}p\pmod{p^3},
\end{align*}
where the last congruence is equivalent to van Hamme's (F.2) by using the following property of the $p$-adic gamma function: for odd primes $p$ and $x\in \mathbb Z_p$,
\begin{equation*}
\Gamma_p(x)\Gamma_p(1-x)=(-1)^{a_p(x)},
\end{equation*}
where $a_p(x)\in\left\{1,2,\cdots,p\right\}$ with $x\equiv a_p(x)\pmod p$.

In fact, we can combine Guo's $q$-supercongruences \eqref{eq8}, \eqref{eq13} and
Theorems \ref{Thm3}-\ref{Thm4} in the unified form as follows.
\begin{theorem}\label{Thm1}
Let $n$  be an odd integer and $d$  a positive integer with $\gcd(n,d)=1$. Let $r$ be an integer with $n-dn+d\leq r\leq n$  and $n\equiv r\pmod d$. Then,
modulo $\Phi_n(-q)^3\Phi_n(q)^2$,
\begin{equation}\label{Eq1}
\begin{split}
&\sum_{k=0}^{(n-r)/d}(-1)^k\frac{(1+q^{2dk+r})(q^{2r};q^{2d})^3_k}{(1+q^r)(q^{2d};q^{2d})^3_k}q^{dk^2+2k(d-r)}\\
&\quad\equiv(-1)^{(n-r)/d}\frac{[n]_{q^2}}{[r]_{q^2}}q^{{(n-r)(n+r-d)}/d}
\sum_{k=0}^{(n-r)/d}\frac{(q^{2r};q^{2d})^2_k(q^d;q^{2d})_k}{(q^{2d},q^{d+r},q^{2d+r};q^{2d})_k}q^{2k(d-r)}.
\end{split}
\end{equation}
\end{theorem}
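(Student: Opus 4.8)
The plan is to prove \eqref{Eq1} by the creative microscoping method of Guo and Zudilin, with the Bailey transformation \eqref{Eq2} supplying the exact evaluation. Since $n$ is odd, $\Phi_n(q)$ and $\Phi_n(-q)$ share no root, so by the Chinese remainder theorem it suffices to prove \eqref{Eq1} separately modulo $\Phi_n(q)^2$ and modulo $\Phi_n(-q)^3$; this is also what forces the asymmetric exponents in the modulus. To run the microscope I insert a free parameter $a$ by replacing the cube $(q^{2r};q^{2d})_k^3$ in the numerator of the left-hand side by $(q^{2r},aq^{2r},q^{2r}/a;q^{2d})_k$ and the cube $(q^{2d};q^{2d})_k^3$ in the denominator by $(q^{2d},aq^{2d},q^{2d}/a;q^{2d})_k$, so that the deformed left-hand side recovers that of \eqref{Eq1} at $a=1$ and is invariant under $a\mapsto 1/a$.

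The engine is the specialization of \eqref{Eq2} under $q\mapsto q^{d}$, $\alpha=q^{r}$, $\lambda=q^{d}$, with its parameters $a,b$ (which I write $a_{\mathrm B},b_{\mathrm B}$ to avoid clashing with the microscoping parameter) set to $a_{\mathrm B}=q^{2(d-r)}a$ and $b_{\mathrm B}=q^{2(d-r)}/a$. I expect three simplifications, all by direct computation: the factors $(-q^{d},q^{r};q^{d})_k$ in the numerator cancel against $(q^{r},-q^{d};q^{d})_k$ in the denominator; the accumulated $q$-powers and signs collapse to $(-1)^{k}q^{dk^{2}+2k(d-r)}$, matching the left summand of \eqref{Eq1}; and, since $\lambda=q^{d}$ forces the upper parameter $q^{d}\lambda=q^{2d}$ to cancel the lower parameter $\lambda^{2}=q^{2d}$, the ${}_4\phi_3$ in \eqref{Eq2} collapses to a ${}_3\phi_2$ which at $a=1$ is precisely
\[
{}_{3}\phi_{2}\!\left[\begin{array}{c} q^{2r},\ q^{2r},\ q^{d}\\ q^{d+r},\ q^{2d+r}\end{array};q^{2d},\ q^{2(d-r)}\right],
\]
the sum on the right of \eqref{Eq1}. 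Thus both sides of \eqref{Eq1} are identified with the two sides of a Bailey evaluation carrying the parameter $a$.

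The role of the hypothesis $n\equiv r\pmod d$ is that $r+d\cdot\frac{n-r}{d}=n$. Consequently, at the value $a=q^{-2n}$ the parameter $b_{\mathrm B}$ equals $q^{2d((n-r)/d+1)}$, the deformed series terminates exactly at $k=(n-r)/d$ — matching the summation range in \eqref{Eq1} — and the Bailey prefactor collapses to a finite ratio of $q$-shifted factorials of length $(n-r)/d$. The numerator of this prefactor contains $1-q^{2r+2d(n-r)/d}=1-q^{2n}$, which is the source, modulo the cyclotomic factors, of $[n]_{q^2}/[r]_{q^2}=(1-q^{2n})/(1-q^{2r})$, of the sign $(-1)^{(n-r)/d}$, and of the power $q^{(n-r)(n+r-d)/d}$ on the right of \eqref{Eq1}. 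The hypothesis $n-dn+d\le r\le n$ guarantees $0\le (n-r)/d\le n-1$, so for $k\le (n-r)/d$ the denominator $(q^{2d};q^{2d})_k$ is coprime to $\Phi_n(\pm q)$ and produces no spurious cancellation.

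The main obstacle is the cyclotomic bookkeeping that yields the asymmetric modulus. By the $a\mapsto 1/a$ symmetry, the difference of the two sides of \eqref{Eq1} vanishes at both $a=q^{2n}$ and $a=q^{-2n}$, where the Bailey step turns it into an identity between terminating sums; hence this difference is divisible by $(1-aq^{2n})(a-q^{2n})$. Setting $a=1$ gives divisibility by $(1-q^{2n})^{2}=(1-q^{n})^{2}(1+q^{n})^{2}$, and therefore — since $\Phi_n(q)\mid 1-q^{n}$, $\Phi_n(-q)\mid 1+q^{n}$, and the two are coprime — by $\Phi_n(q)^{2}\Phi_n(-q)^{2}$. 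This already settles the congruence modulo $\Phi_n(q)^2$ and two of the three required powers of $\Phi_n(-q)$. The third power of $\Phi_n(-q)$ is more delicate and calls for a supplementary argument — a refined analysis of the terminating Bailey evaluation modulo $\Phi_n(-q)$, exploiting the factor $1+q^{2dk+r}$ in the summand — after which the limit $a\to1$ must be taken so that the prefactor, the factor $[n]_{q^2}/[r]_{q^2}$, and the residual discrepancy between the deformed and undeformed ${}_3\phi_2$ are matched to the correct order. Carrying out this power count, rather than the parameter matching of the Bailey transformation, is where the substantive work lies.
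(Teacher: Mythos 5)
Your proposal reproduces the paper's strategy faithfully up to the decisive point: the insertion of the microscoping parameter $a$, the Bailey specialization $q\mapsto q^{d}$, $\alpha=q^{r}$, $\lambda=q^{d}$ with $a_{\mathrm B}b_{\mathrm B}=q^{4(d-r)}$ (so that the ${}_4\phi_3$ collapses to the correct ${}_3\phi_2$), the termination at $a=q^{\pm2n}$ with the prefactor evaluation \eqref{eq3}, and the conclusion that the difference of the two deformed sides is divisible by $(1-aq^{2n})(a-q^{2n})$, which at $a=1$ delivers the modulus $\Phi_n(q)^2\Phi_n(-q)^2$. All of this is correct and is exactly the content of the paper's Lemma \ref{Lemma2} and the deduction of Theorem \ref{Thm1} from it — minus one ingredient. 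But that ingredient, the third power of $\Phi_n(-q)$, is the whole point of the theorem's modulus, and you leave it as ``a supplementary argument \dots where the substantive work lies'' without supplying the idea. That is a genuine gap, not a loose end.

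Moreover, the direction you gesture at — a refined analysis of the \emph{terminating} Bailey evaluation at $a=q^{\pm2n}$ — is not the mechanism that closes it. The paper keeps $a$ as an indeterminate and shows that the parametric congruence holds modulo $\Phi_n(-q)$ identically in $a$: the right-hand side is $\equiv 0\pmod{\Phi_n(-q)}$ because of the factor $[n]_{q^2}$ (for odd $n$, $\Phi_n(-q)=\Phi_{2n}(q)$ divides $[n]_{q^2}$), and the left-hand side is $\equiv 0\pmod{\Phi_n(-q)}$ because its $k$-th and $((n-r)/d-k)$-th terms cancel in pairs. This pairwise cancellation is exactly Guo and Schlosser's congruence, the paper's Lemma \ref{Lemma1}, applied with $q^2$ in place of $q$ so that it holds modulo $\Phi_n(q^2)=\Phi_n(q)\Phi_n(-q)$ and in particular modulo $\Phi_n(-q)$, combined with $q^{2n}\equiv1$, $q^{n}\equiv-1$ to handle the factors $1+q^{2dk+r}$ and the quadratic $q$-exponents. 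Since $\Phi_n(-q)$, $1-aq^{2n}$ and $a-q^{2n}$ are pairwise coprime, the parametric congruence \eqref{eq2} then holds modulo their product, and only after that does one let $a\to1$ (checking, as the paper does, that neither side's denominators contribute factors of $\Phi_n(\pm q)$ — this is where $\gcd(n,d)=1$ enters), so that $(1-aq^{2n})(a-q^{2n})\to(1-q^{2n})^2$ upgrades the modulus to $\Phi_n(-q)^3\Phi_n(q)^2$. The order matters: the extra factor $\Phi_n(-q)$ must be secured for indeterminate $a$, as a companion modulus to $(1-aq^{2n})(a-q^{2n})$, not extracted afterwards from the evaluations at $a=q^{\pm2n}$. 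Without this pairing argument your proposal proves only the weaker congruence modulo $\Phi_n(q)^2\Phi_n(-q)^2$.
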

Letting $d=2$, $r=1$; $d=2$, $r=-1$; $d=3$, $r=s$ and $d=4$, $r=s$ in Theorem \ref{Thm1} respectively, we immediately get
the $q$-supercongruences \eqref{eq8}, \eqref{eq13}, \eqref{EE1} and \eqref{EE2}.

Moreover, when $n=p$, $r=1$, $d$ is a positive even integer and $q\to -1$ in \eqref{Eq1}, we obtain the following supercongruence:
\begin{equation}\label{Eq7}
\sum_{k=0}^{(p-1)/d}(-1)^{k}\left(2dk+1\right)\frac{(1/d)_{k}^{3}}{k!^{3}}
\equiv(-1)^{(p-1)/d}p \pmod {p^3},
\end{equation}
 which is just an extension of \eqref{eq6} for $p\equiv1\pmod{\frac{1}{a}}$ and the special case of Guo's supercongruence \cite[Eq. (1.7)]{Guo1} with $r=1$.

The rest of the paper is organized as follows. In Section \ref{Sec3}, we will prove Theorem \ref{Thm1}
by using the Bailey transformation formula \eqref{Eq2} together with the `creative microscoping' method which was
introduced by Guo and Zudilin \cite{GuoZu}.
Then, we shall give another $q$-supercongruence from the Bailey transformation formula in Section \ref{Sec4}.

\section{Proof of Theorem \ref{Thm1}}\label{Sec3}
We first present an auxiliary $q$-congruence proved by Guo and Schlosser \cite[Lemma 2.1]{Guo2}:
\begin{lemma}\label{Lemma1}
Let $n$, $d$ be positive integers, $r$ an integer satisfying $n-dn+d\leq r\leq n$  and $n\equiv r\pmod d$. Then, for $0\le k\le (n-r)/d$,
we have
\begin{equation}\label{eq1}
\frac{(aq^r;q^d)_{(n-r)/d-k}}{(q^d/a;q^d)_{(n-r)/d-k}}
\equiv (-a)^{(n-r)/d-2k}\frac{(aq^r;q^d)_k}{(q^d/a;q^d)_k}q^{{(n-r)(n-d+r)}/{(2d)}+k(d-r)}\pmod{\Phi_n(q)}.
\end{equation}
\end{lemma}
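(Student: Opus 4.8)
The key arithmetic fact I would invoke is that $q^n \equiv 1 \pmod{\Phi_n(q)}$, since $\Phi_n(q)$ divides $q^n-1$; this lets me replace the exponent $n$ by $0$ inside any power of $q$ reduced modulo $\Phi_n(q)$. The statement compares the quotient $(aq^r;q^d)_{(n-r)/d-k}/(q^d/a;q^d)_{(n-r)/d-k}$ at the "reflected" index $(n-r)/d-k$ against its value at index $k$, so the natural strategy is to write the numerator and denominator at the large index as products over the appropriate ranges and match factors in reverse order.

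\emph{First I would} set $m=(n-r)/d$ for brevity and write out the two Pochhammer products explicitly:
\begin{equation*}
(aq^r;q^d)_{m-k}=\prod_{j=0}^{m-k-1}(1-aq^{r+jd}),\qquad (q^d/a;q^d)_{m-k}=\prod_{j=0}^{m-k-1}(1-q^{d+jd}/a).
\end{equation*}
\emph{Next I would} reindex the products by the substitution $j\mapsto m-1-i$, which runs the factors backwards. In the numerator this turns the typical factor $1-aq^{r+jd}$ into $1-aq^{r+(m-1-i)d}=1-aq^{r+md-d-id}$. Since $r+md=n$, this equals $1-aq^{n-d-id}$, and modulo $\Phi_n(q)$ the relation $q^n\equiv 1$ reduces it to $1-aq^{-d-id}$. \emph{Then I would} factor out a power of $-aq^{-d-id}$ from each reduced factor to turn $1-aq^{-d-id}$ into a constant multiple of $1-q^{d+id}/a$, which is precisely a factor appearing in the \emph{denominator} at index $k$; performing the symmetric computation on the original denominator produces factors of the numerator at index $k$. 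The quotient therefore telescopes, up to an explicit monomial prefactor collecting all the extracted powers of $(-a)$ and $q$, into $(aq^r;q^d)_k/(q^d/a;q^d)_k$ times a power of $-a$ and a power of $q$.

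\emph{Finally I would} bookkeep the exponents. Collecting the signs and the constant $-a$ from each of the $m-2k$ "net" factors yields the factor $(-a)^{m-2k}=(-a)^{(n-r)/d-2k}$, and summing the arithmetic-progression exponents $\sum(d+id)$ that are extracted gives the quadratic term $(n-r)(n-d+r)/(2d)$ together with the linear shift $k(d-r)$ recorded in the statement. \emph{The main obstacle I expect} is precisely this exponent accounting: the quadratic exponent arises from summing a geometric progression of $q$-powers across roughly $m$ factors, and one must be careful that the $k$-dependent endpoints combine correctly so that the $k^2$ contributions cancel and leave the stated linear-in-$k$ term $k(d-r)$. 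Keeping the index bookkeeping disciplined — tracking exactly which factors in the reflected product have already been cancelled against the index-$k$ product and which survive — is where the computation could most easily go wrong, but no conceptual difficulty beyond the single substitution $q^n\equiv 1 \pmod{\Phi_n(q)}$ is involved.
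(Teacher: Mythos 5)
First, a point of reference: the paper itself gives no proof of Lemma~\ref{Lemma1} at all --- it is quoted verbatim from Guo and Schlosser \cite[Lemma 2.1]{Guo2}. So your attempt can only be measured against the standard direct argument (which is the one used in that source), and your plan does follow it in outline: reduce exponents via $q^n\equiv 1\pmod{\Phi_n(q)}$, reflect the products, and extract monomial prefactors. That mechanism is the correct one, and the lemma is indeed provable this way.

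There is, however, a concrete gap in your telescoping step. Write $m=(n-r)/d$. Reflecting the numerator product as you describe produces the factors $1-q^{ld}/a$ with $l\in\{k+1,\dots,m\}$; these are \emph{not} ``factors appearing in the denominator at index $k$'' --- the factors of $(q^d/a;q^d)_k$ have $l\in\{1,\dots,k\}$, so what you obtain is the complementary product $(q^d/a;q^d)_m/(q^d/a;q^d)_k$. Symmetrically, the reflected denominator yields $1-aq^{r+ld}$ with $l\in\{k,\dots,m-1\}$, i.e.\ $(aq^r;q^d)_m/(aq^r;q^d)_k$, not $(aq^r;q^d)_k$ itself. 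Consequently the quotient does not telescope into the index-$k$ ratio times a monomial; what one actually gets is
\begin{equation*}
\frac{(aq^r;q^d)_{m-k}}{(q^d/a;q^d)_{m-k}}\equiv M\cdot\frac{(aq^r;q^d)_k}{(q^d/a;q^d)_k}\cdot\frac{(q^d/a;q^d)_m}{(aq^r;q^d)_m}\pmod{\Phi_n(q)},
\end{equation*}
where $M$ is an explicit monomial in $a$ and $q$, and the surviving full-length ratio is a genuine rational function, not a monomial. Your sketch offers no way to remove it. The fix is one more application of the same idea: pairing the $j$-th factor of $(aq^r;q^d)_m$ with the $(m-1-j)$-th factor of $(q^d/a;q^d)_m$ gives $(1-aq^{r+jd})/(1-q^{(m-j)d}/a)\equiv -aq^{r+jd}\pmod{\Phi_n(q)}$, whence
\begin{equation*}
\frac{(aq^r;q^d)_m}{(q^d/a;q^d)_m}\equiv(-a)^m q^{rm+dm(m-1)/2}=(-a)^m q^{(n-r)(n+r-d)/(2d)}\pmod{\Phi_n(q)}.
\end{equation*}
Inserting this evaluation (and noting that the accumulated $q$-exponents then agree with those in \eqref{eq1} up to a multiple of $n$, which is again absorbed by $q^n\equiv1$) closes the computation and also produces the sign $(-a)^{m-2k}$, which does not arise from ``$m-2k$ net factors'' as you claim but from the combination $a^{2(m-k)}\cdot(-a)^{-m}$. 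In short: right strategy, but the single-reflection telescoping you describe would fail without this additional reduction step.
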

We next present the following one-parametric generalization of Theorem \ref{Thm1}.
\begin{lemma}\label{Lemma2}
Let $n$ be an odd integer, $d$ a positive integer, $a$ an indeterminate, $r$ an integer with $n-dn+d\leq r\leq n$
 and $n\equiv r\pmod d$.
Then, modulo $\Phi_n(-q)(1-aq^{2n})(a-q^{2n})$,
\begin{flalign}
&\sum_{k=0}^{(n-r)/d}(-1)^k\frac{(1+q^{2dk+r})(aq^{2r},q^{2r}/a,q^{2r};q^{2d})_k}
{(1+q^r)(aq^{2d},q^{2d}/a,q^{2d};q^{2d})_k}q^{dk^2+2k(d-r)}\nonumber\\
&\quad\equiv(-1)^{(n-r)/d}\frac{[n]_{q^2}}{[r]_{q^2}}q^{{(n-r)(n+r-d)}/d}
\sum_{k=0}^{(n-r)/d}\frac{(aq^{2r},q^{2r}/a,q^d;q^{2d})_k}
{(q^{2d},q^{d+r},q^{2d+r};q^{2d})_k}q^{2k(d-r)}.\label{eq2}
\end{flalign}
\end{lemma}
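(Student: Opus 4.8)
The plan is to prove \eqref{eq2} by the creative microscoping method of Guo and Zudilin: regard both sides as rational functions of the parameter $a$, and establish the congruence modulo each of the three pairwise coprime factors $1-aq^{2n}$, $a-q^{2n}$ and $\Phi_n(-q)$ separately; since these three polynomials are relatively prime (for $n>1$), the Chinese remainder theorem then yields the asserted congruence modulo their product. Throughout write $M=(n-r)/d$ for the summation limit, and let $L(a)$ and $R(a)$ denote the left- and right-hand sides of \eqref{eq2}.

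I would first treat the factor $1-aq^{2n}$, i.e. specialize $a=q^{-2n}$. With this choice the factor $(aq^{2r};q^{2d})_k=(q^{2r-2n};q^{2d})_k$ vanishes for $k>M$, so both $L(q^{-2n})$ and $R(q^{-2n})$ are genuinely terminating, and it suffices to prove the exact identity $L(q^{-2n})=R(q^{-2n})$. This is precisely what the Bailey transformation \eqref{Eq2} delivers: replacing $q$ by $q^{d}$ and choosing $\alpha=q^{r}$, $\lambda=q^{d}$, and the two free parameters of \eqref{Eq2} as $q^{2(d-r)}/a$ and $aq^{2(d-r)}$ (with $a$ now the parameter of \eqref{eq2}) transforms its left-hand side termwise into $L(a)$: the spurious factors $(-q^{d},q^{r};q^{d})_k$ in the numerator cancel against $(q^{r},-q^{d};q^{d})_k$ in the denominator, and the $q$-power collapses to $q^{dk^{2}+2k(d-r)}$. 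Simultaneously, the choice $\lambda=q^{d}$ makes the pair $(\lambda,q^{d}\lambda;q^{2d})_k$ equal $(q^{d};q^{2d})_k(q^{2d};q^{2d})_k$, the second factor cancelling $(\lambda^{2};q^{2d})_k$ in the denominator, so that the ${}_4\phi_3$ on the right of \eqref{Eq2} becomes exactly the sum on the right of \eqref{eq2}. It then remains to verify that the infinite-product prefactor of \eqref{Eq2}, evaluated at $a=q^{-2n}$, collapses to the elementary coefficient $(-1)^{M}\frac{[n]_{q^2}}{[r]_{q^2}}q^{(n-r)(n+r-d)/d}$; this is a finite telescoping computation that uses $2r+2dM=2n$.

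The factor $a-q^{2n}$ is then free. Both $L$ and $R$ are invariant under $a\mapsto 1/a$, since the parameter enters only through the symmetric pairs $aq^{2r},\,q^{2r}/a$ and $aq^{2d},\,q^{2d}/a$; hence $L(q^{2n})=L(q^{-2n})=R(q^{-2n})=R(q^{2n})$ and the congruence modulo $a-q^{2n}$ follows from the previous case. For the factor $\Phi_n(-q)$ I would argue that both sides vanish. On the right, $q^{2n}\equiv1\pmod{\Phi_n(-q)}$ because $n$ is odd, so $[n]_{q^2}=(1-q^{2n})/(1-q^2)\equiv0$; as $[r]_{q^2}$ and the denominators $(q^{2d},q^{d+r},q^{2d+r};q^{2d})_k$ are invertible modulo $\Phi_n(-q)$, this gives $R(a)\equiv0$. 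On the left, $L(a)$ is very-well-poised, and I would pair the $k$-th term with the $(M-k)$-th term. The needed relation between $\frac{(aq^{2r},q^{2r}/a;q^{2d})_{M-k}}{(aq^{2d},q^{2d}/a;q^{2d})_{M-k}}$ and its $k$-analogue comes from Lemma \ref{Lemma1} applied with $q$ replaced by $q^{2}$: since $\Phi_n(q^{2})=\Phi_n(q)\Phi_n(-q)$ for odd $n$, that lemma holds in particular modulo $\Phi_n(-q)$. Combining it with the behaviour of the very-well-poised factor $(1+q^{2dk+r})$ under $k\mapsto M-k$ (again using $q^{2n}\equiv1$) shows that the $k$-th and $(M-k)$-th terms are negatives of one another modulo $\Phi_n(-q)$, so the terms cancel in pairs and $L(a)\equiv0$; the self-paired middle term (when $M$ is even) is thereby forced to vanish as well. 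Hence $L(a)\equiv R(a)\equiv0\pmod{\Phi_n(-q)}$.

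The \emph{main obstacle} is the first step: recognizing the correct specialization of the free quantities in the Bailey transformation \eqref{Eq2} that simultaneously reproduces $L(a)$ on the left and the target ${}_4\phi_3$ on the right, and then checking that the resulting infinite-product prefactor at $a=q^{-2n}$ telescopes to exactly the elementary coefficient of $R(a)$. The pairing argument modulo $\Phi_n(-q)$ is comparatively routine, but still demands care with the $q$-powers and with the base change $q\mapsto q^{2}$ in Lemma \ref{Lemma1}.
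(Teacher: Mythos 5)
Your proposal is correct and follows essentially the same route as the paper: specialize $a=q^{\mp 2n}$ so the sum terminates and apply the Bailey transformation \eqref{Eq2} with $q\to q^d$, $\alpha=q^r$, $\lambda=q^d$ (your parameter choice is the paper's up to swapping $a$ and $b$), identify the prefactor with $(-1)^{(n-r)/d}\frac{[n]_{q^2}}{[r]_{q^2}}q^{(n-r)(n+r-d)/d}$ exactly as in the paper's equation \eqref{eq3}, then use Lemma \ref{Lemma1} (in base $q^2$) to pair the $k$-th and $((n-r)/d-k)$-th terms and show both sides vanish modulo $\Phi_n(-q)$, concluding by coprimality of the three moduli. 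The only cosmetic difference is that you derive the case $a=q^{2n}$ from $a=q^{-2n}$ via the $a\mapsto 1/a$ symmetry, while the paper treats both specializations in one stroke.
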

\begin{proof}
For $a=q^{2n}$ or $a=q^{-2n}$, the left-hand side of \eqref{eq2} is equal to
\begin{flalign*}
&\sum_{k=0}^{(n-r)/d}(-1)^k\frac{(1+q^{2dk+r})(q^{2r+2n},q^{2r-2n},q^{2r};q^{2d})_k}
{(1+q^r)(q^{2d+2n},q^{2d-2n},q^{2d};q^{2d})_k}q^{dk^2+2k(d-r)}\\
&\quad=\frac{(q^{2r+2d},q^{2d-2r};q^{2d})_\infty}{(q^{2d-2n},q^{2d+2n};q^{2d})_\infty}
\sum_{k=0}^{(n-r)/d}\frac{(q^{2r+2n},q^{2r-2n},q^d;q^{2d})_k}
{(q^{2d},q^{d+r},q^{2d+r};q^{2d})_k}q^{2k(d-r)}
\end{flalign*}
which is computed by the Bailey transformation \eqref{Eq2} with the parameter substitutions $q\rightarrow q^d$, $\alpha=q^r$,
 $a=q^{2(d-r-n)}$, $b=q^{2(d-r+n)}$ and $\lambda=q^d$. It is clear that
 \begin{equation}\label{eq3}
 \frac{(q^{2r+2d},q^{2d-2r};q^{2d})_\infty}{(q^{2d-2n},q^{2d+2n};q^{2d})_\infty}
 =\frac{(q^{2r+2d};q^{2d})_{(n-r)/d}}{(q^{2d-2n};q^{2d})_{(n-r)/d}}
 =(-1)^{(n-r)/d}\frac{[n]_{q^2}}{[r]_{q^2}}q^{(n-r)(n+r-d)/d}.
 \end{equation}
 This proves that the congruence \eqref{eq2} holds modulo $1-aq^{2n}$ or $a-q^{2n}$.

 From  $q$-congruence \eqref{eq1}, we deduce that, for $0\le k\le (n-r)/d$,
 the $k$-th and $((n-r)/d-k)$-th terms of the left-hand side of \eqref{eq2} modulo $\Phi_n(-q)$ cancel each other,
 i.e.,
 \begin{flalign*}
 &(-1)^{(n-r)/d-k}\frac{(1+q^{2n-2dk-r)})(aq^{2r},q^{2r}/a,q^{2r};q^{2d})_{(n-r)/d-k}}
 {(1+q^r)(q^{2d}/a,aq^{2d},q^{2d};q^{2d})_{(n-r)/d-k}}q^{d((n-r)/d-k)^2+2((n-r)/d-k)(d-r)}\\
 &\quad\equiv-(-1)^k\frac{(1+q^{2dk+r})(aq^{2r},q^{2r}/a,q^{2r};q^{2d})_k}
{(1+q^r)(aq^{2d},q^{2d}/a,q^{2d};q^{2d})_k}q^{dk^2+2k(d-r)}\pmod{\Phi_n(-q)}.
 \end{flalign*}
 Thus, the left-hand side of \eqref{eq2} is congruent to $0$ modulo $\Phi_n(-q)$.
  It is obvious that the right-hand side of \eqref{eq2} is also congruent to $0$ modulo $\Phi_n(-q)$.
   As a result, the congruence \eqref{eq2} holds  modulo $\Phi_n(-q)$.
   Since $1-aq^{2n}$, $a-q^{2n}$ and $\Phi_n(-q)$ are pairwise relatively prime polynomials, the proof of \eqref{eq2} is completed.
\end{proof}

\begin{proof}[Proof of Theorem \ref{Thm1}]
The denominators of the left-hand side of \eqref{eq2} are relatively prime to $\Phi_n(q^2)$ as $a\rightarrow 1$ because of $\gcd(n,d)=1$
and $n\equiv r \pmod{d}$.
The denominators of the reduced form of the right-hand side of \eqref{eq2} are relatively prime to $\Phi_n(q^2)$
 because of  the factor $[n]_{q^2}$ before the summation. On the other hand, the limit of $(1-aq^{2n})(a-q^{2n})$
 as $a\rightarrow 1$ has the factor $\Phi_n(q^2)^2$. Thus, letting $a\rightarrow1$ in \eqref{eq2}, we obtain the $q$-congruence \eqref{Eq1}.
\end{proof}

\section{The other $q$-supercongruence \label{Sec4}}

In \cite{Guo}, Guo also proved the following two $q$-supercongruences: for odd integers $n$,

\begin{equation}\label{E3}
\begin{split}
&\sum_{k=0}^{(n-1)/2}(-1)^{k} \frac{\left(1+q^{4 k+1}\right)\left(q^{2} ; q^{4}\right)_{k}^{2}}{(1+q)\left(q^{4} ; q^{4}\right)_{k}^{2}} q^{2k^{2}+k} \equiv(-1)^{(n-1) / 2}[n]_{q^{2}}q^{(n-1)^{2}/2}\\
&\:\:\times\sum_{k=0}^{(n-1)/2} \frac{\left(q^{2} ; q^{4}\right)_{k}q^{2k}}{[4k+1]\left(q^{4};q^{4}\right)_{k}} \left\{
\begin{aligned}
\pmod {\Phi_{n}(q)^{2} \Phi_{n}(-q)^{3}} \qquad \mathrm{if} \quad n\equiv1\pmod4,\\
\pmod{\Phi_{n}(q)^{3} \Phi_{n}(-q)^{3}} \qquad\mathrm{if}\quad n\equiv3\pmod4;
\end{aligned}
\right.
\end{split}
\end{equation}
\begin{equation}\label{E4}
\begin{split}
&\sum_{k=0}^{(n+1) / 2}(-1)^{k} \frac{\left(1+q^{4k-1}\right)\left(q^{-2};q^{4}\right)_{k}^{2}}
{\left(1+q^{-1}\right)\left(q^{4};q^{4}\right)_{k}^2} q^{2k^2+3k}
\equiv(-1)^{(n-1) / 2}[n]_{q^2}q^{(n-1)^{2}/2}\\
&\:\:\times \sum_{k=0}^{(n+1)/2} \frac{\left(q^{-2},q^{-1}; q^{4}\right)_{k}}{(q^{4},q^3 ; q^{4})_{k}} q^{6k}\left\{
\begin{aligned}
\pmod {\Phi_{n}(q)^{2} \Phi_{n}(-q)^{3}}, \qquad \mathrm{if} \quad n\equiv1\pmod4,\\
\pmod{\Phi_{n}(q)^{3} \Phi_{n}(-q)^{3}}, \qquad \mathrm{if} \quad n\equiv3\pmod4.
\end{aligned}
\right.
\end{split}
\end{equation}
In this section, we will give a partial generalization of \eqref{E3} and \eqref{E4}.
\begin{theorem}\label{Thm2}
Let $n$ be an odd integer, $d$ a positive integer with $\gcd(n,d)=1$, $r$ an integer with $n-dn+d\leq r\leq n$  and $n\equiv r\pmod d$.
Then, modulo $\Phi_n(-q)^3\Phi_n(q)^2$,
\begin{equation}\label{EQ2}
\begin{split}
&\sum_{k=0}^{(n-r)/d}(-1)^k\frac{(1+q^{2dk+r})(q^{2r};q^{2d})^2_k}
{(1+q^r)(q^{2d};q^{2d})^2_k}q^{dk^2+k(d-r)}\\
&\quad\equiv(-1)^{(n-r)/d}\frac{[n]_{q^2}}{[r]_{q^2}}q^{{(n-r)(n+r-d)}/d}
\sum_{k=0}^{(n-r)/d}\frac{(q^{2r},q^r;q^{2d})_k}
{(q^{2d},q^{2d+r};q^{2d})_k}q^{2k(d-r)}.
\end{split}
\end{equation}
\end{theorem}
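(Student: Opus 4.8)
The plan is to follow the template of the proof of Theorem~\ref{Thm1} almost verbatim: produce a one-parameter refinement of \eqref{EQ2} carrying an indeterminate $a$, verify it modulo $\Phi_n(-q)(1-aq^{2n})(a-q^{2n})$ by the two complementary devices of Lemma~\ref{Lemma1} and the Bailey transformation \eqref{Eq2}, and then let $a\to1$. Concretely, I would establish, modulo $\Phi_n(-q)(1-aq^{2n})(a-q^{2n})$,
\begin{align*}
&\sum_{k=0}^{(n-r)/d}(-1)^k\frac{(1+q^{2dk+r})(aq^{2r},q^{2r}/a;q^{2d})_k}{(1+q^r)(aq^{2d},q^{2d}/a;q^{2d})_k}q^{dk^2+k(d-r)}\\
&\quad\equiv(-1)^{(n-r)/d}\frac{[n]_{q^2}}{[r]_{q^2}}q^{(n-r)(n+r-d)/d}\sum_{k=0}^{(n-r)/d}\frac{(aq^{2r},q^{2r}/a,q^r;q^{2d})_k}{(q^{2d},q^{2r},q^{2d+r};q^{2d})_k}q^{2k(d-r)},
\end{align*}
which collapses to \eqref{EQ2} at $a=1$, since there the spurious denominator factor $(q^{2r};q^{2d})_k$ cancels the numerator factor $(aq^{2r};q^{2d})_k$.

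For the analytic half I set $a=q^{2n}$ or $a=q^{-2n}$, whereupon the left-hand side terminates through the factor $(q^{2r-2n};q^{2d})_k$ and is summed by \eqref{Eq2} under the substitutions $q\to q^d$, $\alpha=q^r$, $a=q^{2(d-r-n)}$, $b=q^{2(d-r+n)}$ (in the notation of \eqref{Eq2}) and, crucially, $\lambda=q^r$. The only departure from Lemma~\ref{Lemma2} is this last choice: with $\lambda=q^r$ the base-$q^d$ shifted factorials in \eqref{Eq2} coalesce through the elementary identity $(x;q^d)_k(-x;q^d)_k=(x^2;q^{2d})_k$, so that the numerator pair $(-q^d;q^d)_k(q^d;q^d)_k=(q^{2d};q^{2d})_k$ cancels a $(q^{2d};q^{2d})_k$ while the denominator pair $(q^r;q^d)_k(-q^r;q^d)_k=(q^{2r};q^{2d})_k$ cancels the factor $(\alpha^2;q^{2d})_k=(q^{2r};q^{2d})_k$. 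This is precisely what demotes the cubic summand of Theorem~\ref{Thm1} to the quadratic one here, and the power of $q$ arising from $(-\alpha^2\lambda ab)^kq^{d(k^2-3k)}$ now reads $q^{dk^2+k(d-r)}$. Since the prefactor $\frac{(\alpha^2q^2,\alpha^2ab/q^2;q^2)_\infty}{(\alpha^2a,\alpha^2b;q^2)_\infty}$ does not involve $\lambda$, it coincides with the product in \eqref{eq3} and hence equals the constant standing before the right-hand sum; this proves the refinement modulo $1-aq^{2n}$ and modulo $a-q^{2n}$.

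For the arithmetic half I apply Lemma~\ref{Lemma1} to pair the $k$-th and $((n-r)/d-k)$-th summands on the left, which cancel modulo $\Phi_n(-q)$; the left-hand side therefore vanishes there, and so does the right-hand side owing to the factor $[n]_{q^2}$, which is divisible by $\Phi_n(-q)=\Phi_{2n}(q)$ for odd $n$. As $1-aq^{2n}$, $a-q^{2n}$ and $\Phi_n(-q)$ are pairwise coprime, the refinement holds modulo their product. Letting $a\to1$ then reproduces \eqref{EQ2} by the same reasoning as in Section~\ref{Sec3}: the reduced denominators on both sides are prime to $\Phi_n(q^2)=\Phi_n(q)\Phi_n(-q)$ because $\gcd(n,d)=1$ and $n\equiv r\pmod d$, while the limit of $(1-aq^{2n})(a-q^{2n})$ carries the factor $\Phi_n(q^2)^2$, upgrading the modulus to $\Phi_n(-q)^3\Phi_n(q)^2$.

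The single genuinely new point, and the step I expect to require the most care, is the verification that $\lambda=q^r$ yields the quadratic summand: one must check that all base-$q^d$ factors coalesce and cancel exactly and that the resulting exponent is $dk^2+k(d-r)$ rather than the $dk^2+2k(d-r)$ of Theorem~\ref{Thm1}. Once this specialization is secured, the divisibility via Lemma~\ref{Lemma1} and the limit $a\to1$ are entirely routine, being identical to the corresponding steps in the proof of Theorem~\ref{Thm1}.
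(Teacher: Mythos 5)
Your proposal is correct and matches the paper's own proof essentially step for step: the same one-parameter refinement, the Bailey transformation \eqref{Eq2} with the identical substitutions (including the key choice $\lambda=q^r$, which collapses the cubic summand to the quadratic one), the same pairing argument via Lemma~\ref{Lemma1} modulo $\Phi_n(-q)$, and the same limit $a\to1$ as in the proof of Theorem~\ref{Thm1}. Your verification of the exponent $dk^2+k(d-r)$ and of the cancellation $(q^d;q^d)_k(-q^d;q^d)_k=(q^{2d};q^{2d})_k$, $(q^r;q^d)_k(-q^r;q^d)_k=(q^{2r};q^{2d})_k$ is exactly the computation the paper leaves implicit.
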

\begin{proof}[Sketch of proof]
Applying the Bailey transformation formula \eqref{Eq2} by performing the replacements $q\rightarrow q^d$, $\alpha=q^r$,
 $a=q^{2(d-r-n)}$, $b=q^{2(d-r+n)}$ and $\lambda=q^r$, we have
\begin{equation*}
\begin{split}
&\sum_{k=0}^{(n-r)/d}(-1)^k\frac{(1+q^{2dk+r})(q^{2r+2n},q^{2r-2n};q^{2d})_k}
{(1+q^r)(q^{2d+2n},q^{2d-2n};q^{2d})_k}q^{dk^2+k(d-r)}\\
&\quad=\frac{(q^{2r+2d},q^{2d-2r};q^{2d})_\infty}{(q^{2d-2n},q^{2d+2n};q^{2d})_\infty}
\sum_{k=0}^{(n-r)/d}\frac{(q^{2r+2n},q^{2r-2n},q^r;q^{2d})_k}
{(q^{2d},q^{2d+r},q^{2r};q^{2d})_k}q^{2k(d-r)}.
\end{split}
\end{equation*}
Using equation \eqref{eq3}, we immediately get, modulo $1-aq^{2n}$ or $a-q^{2n}$,
\begin{equation}\label{EQ1}
\begin{split}
&\sum_{k=0}^{(n-r)/d}(-1)^k\frac{(1+q^{2dk+r})(aq^{2r},q^{2r}/a;q^{2d})_k}
{(1+q^r)(aq^{2d},q^{2d}/a;q^{2d})_k}q^{dk^2+k(d-r)}\\
&\quad\equiv(-1)^{(n-r)/d}\frac{[n]_{q^2}}{[r]_{q^2}}q^{{(n-r)(n+r-d)}/d}
\sum_{k=0}^{(n-r)/d}\frac{(aq^{2r},q^{2r}/a,q^r;q^{2d})_k}
{(q^{2d},q^{2dk+r},q^{2r};q^{2d})_k}q^{2k(d-r)}.
\end{split}
\end{equation}
Since $1-aq^{2n}$ and $a-q^{2n}$ are relatively
prime polynomials, \eqref{EQ1} still holds modulo $(1-aq^{2n})(a-q^{2n})$.

From  $q$-congruence (\ref{eq1}), we deduce that, for $0\le k\le (n-r)/d$,
the $((n-r)/d-k)$-th terms of the left-hand side of (\ref{EQ1}) modulo $\Phi_n(-q)$ can be written as follows,
 \begin{equation*}
 \begin{split}
 &(-1)^{(n-r)/d-k}\frac{(1+q^{2n-2dk-r)})(aq^{2r},q^{2r}/a;q^{2d})_{(n-r)/d-k}}
 {(1+q^r)(q^{2d}/a,aq^{2d};q^{2d})_{(n-r)/d-k}}q^{d((n-r)/d-k)^2+((n-r)/d-k)(d-r)}\\
 &\equiv(-1)^{(n-r)/d-k}\frac{(1+q^{2dk+r})(aq^{2r},q^{2r}/a;q^{2d})_k}
{(1+q^r)(aq^{2d},q^{2d}/a;q^{2d})_k}q^{3n(n-r)/d+dk^2+k(d-r)-n}\\
 &\equiv-(-1)^k\frac{(1+q^{2dk+r})(aq^{2r},q^{2r}/a;q^{2d})_k}
{(1+q^r)(aq^{2d},q^{2d}/a;q^{2d})_k}q^{dk^2+k(d-r)}\pmod{\Phi_n(-q)},
 \end{split}
 \end{equation*}
 which means  the left-hand side of (\ref{EQ1}) is congruent to $0$ modulo $\Phi_n(-q)$.
 Since the right-hand side of \eqref{EQ1} is also congruent to $0$ modulo $\Phi_n(-q)$, we
 can prove that \eqref{EQ1} holds modulo $\Phi_n(-q)(1-aq^{2n})(a-q^{2n})$.

The remaining proof of Theorem \ref{Thm2}
follows from the lines of the proof of Theorem \ref{Thm1}.
\end{proof}

\end{document}